\DeclareMathOperator{\NN}{\mathbb{N}}
\DeclareMathOperator{\RR}{\mathbb{R}}
\DeclareMathOperator{\PP}{\mathbb{P}}
\newtheorem{theorem}{Theorem}[section]
\newtheorem{lemma}{Lemma}[section]
\newtheorem{corollary}{Corollary}[section]
\newtheorem{proposition}{Proposition}[section]
\newtheorem{remark}{Remark}[section]
\author{Yves Le Jan}
\title{Sampling biased spanning forests through marked vertices}
\begin{document}
\maketitle
\begin{abstract}
We study biased spanning forests on the complete graph \(K_N\), obtained by
assigning weight proportional to \(\kappa^q\) to spanning trees on
\(K_N\cup\{\Delta\}\), where \(q\) is the degree of a distinguished root
\(\Delta\).

Fixing a finite set \(L\) of marked vertices, we analyze the minimal
\(\Delta\)-rooted subtree connecting \(L\) to \(\Delta\). In this sense, we
investigate the effect of partially sampling a large random spanning forest
through finitely many vertices.

For fixed \(\kappa\), the reduced subtree is asymptotically a uniformly
distributed binary tree, and graph distances rescaled by \(\sqrt N\) converge
jointly in distribution to the explicit limit introduced by Aldous in his
study of the Brownian CRT. We show that the scale
\(\kappa\asymp\sqrt N\) is critical: if \(\kappa=o(\sqrt N)\), the marked
vertices lie asymptotically in a single component; if
\(\kappa\gg\sqrt N\), the induced partition is asymptotically discrete and
the distances to \(\Delta\) are negligible on the \(\sqrt N\) scale.

In the critical regime \(\kappa=c\sqrt N\), the induced partition of the
marked vertices converges to a non-degenerate limit law, namely the
\(\tfrac12\)-stable Poisson--Kingman partition studied by Pitman. We also
describe a continuous-time edge-cutting dynamics: under the critical scaling,
its fixed-time marginals are obtained by the parameter shift
\(c\mapsto c+a\).

Our approach remains entirely discrete and combinatorial: the asymptotics are
obtained from explicit determinant formulas and elementary expansions, without
invoking continuum limits.

\end{abstract}
\section{Introduction}

Given a random spanning tree $\Upsilon$ of the complete graph
$K_N$ with $N$ vertices, and a fixed subset
$L=\{a_1,\dots,a_l\}$ of $l<N$ labelled vertices,
a natural question is the following:
how are these $l$ vertices connected within the tree when $N$ is large?
More precisely, considering the smallest subtree
$\Upsilon_L$ of $\Upsilon$ containing $L$,
what is the asymptotic distribution, as $N\to\infty$,
of its topological type and of the graph distances
between its leaves and internal nodes?

%This problem can be viewed as a finite-dimensional
%description of the local geometry of large random spanning trees.
\medskip
\paragraph{Spanning forests with killing.}
We formulate the problem in a slightly more general setting.
We distinguish one vertex $\Delta$ as a root and consider
random spanning trees on $K_N\cup\{\Delta\}$
with weight proportional to $\kappa^{q}$,
where $\kappa>0$ is a parameter and $q$
denotes the degree of $\Delta$.
Equivalently, when restricted to $K_N$,
this induces a $\kappa$-biased spanning forest
in which a forest with $q$ components has weight proportional to $\kappa^{\,q-1}$.

When $\kappa$ is fixed, this model differs only mildly
from the uniform spanning tree.
However, when $\kappa$ varies with $N$,
qualitatively new phenomena appear.
In particular, the scale $\kappa\asymp\sqrt N$
turns out to be critical.
%Our approach remains entirely discrete and combinatorial:
%all asymptotics are obtained from explicit determinant formulas
%and elementary expansions, without invoking continuum limits.

\medskip
\paragraph{Main mechanism.}
The asymptotic analysis throughout the paper follows a single principle.
For fixed reduced tree shapes and fixed integer extension vectors,
the Green-determinant formula yields an explicit finite-\(N\) expression.
After a suitable $\sqrt N$ rescaling of the edge lengths,
this expression admits a limit.
Summing these limits over all integer length vectors
and all binary reduced shapes produces Riemann sums
which converge to explicit integrals.
A crucial point is that the sum of these integrals equals~$1$.
This shows that asymptotically no probability mass is lost
outside the $\sqrt N$ scaling window
and outside the class of binary reduced trees.
Binary shapes therefore emerge with total limiting mass~$1$,
and their distribution becomes uniform by symmetry.
%This ``mass conservation via Riemann sums'' argument
%is used repeatedly.

\medskip
\paragraph{Fixed-$\kappa$ regime.}
In Section~2 we show that,
for fixed $\kappa$ and fixed $l$,
the value of $\kappa$ is asymptotically irrelevant
for the local structure of $\Upsilon_L$:
\emph{the reduced tree of $\Upsilon_L$ is asymptotically
uniformly distributed among binary trees with leaves $L$,
and the graph distances, rescaled by $\sqrt N$,
converge jointly to an explicit distribution.}
As $l$ varies, these distributions form a consistent family.

This family coincides with the consistent family of
``proper $k$-trees'' introduced by Aldous in his study of 
%in Section~4.3 of~\cite{A3}.
%Aldous showed that such families admit representations
%in terms of 
the Brownian CRT (continuum random tree).
In contrast, the present paper derives these asymptotic laws
purely by discrete enumeration and elementary asymptotics,
without appealing to continuum constructions.

\medskip

 \paragraph{Critical regime and phase transition.}
A different phenomenon occurs when
\[
\kappa = c\sqrt N, \qquad c>0.
\]

In Section~3 we show that in this regime the reduced object $Q_L$
obtained from $\Upsilon_L$ is, with probability tending to one, a \emph{bouquet}
consisting of $r\in\{1,\dots,l\}$ binary trees attached to $\Delta$.
Moreover, for each fixed $r$ and each fixed bouquet configuration
(i.e. a partition $\pi$ of $L$ into $r$ blocks, together with a binary tree on each block),
\emph{all such configurations have the same limiting probability}, namely
\[
\PP\bigl(Q_L=\text{given bouquet with $r$ trees}\bigr)
\longrightarrow
I_{l,r}(c),
\]
where $I_{l,r}(c)$ admits an explicit one-dimensional integral representation on $\RR_+$.
In particular,
\[
\PP\bigl(|\Pi_N|=r\bigr)\longrightarrow C_{l,r}\,I_{l,r}(c),
\]
where $|\Pi_N|$ is the number of blocks of the induced partition $\Pi_N$ of $L$ and
$C_{l,r}$ is the number of binary forests on $l$ labelled leaves with $r$ components.
Conditionally on $|\Pi_N|=r$, 
%the rescaled branch lengths have an explicit joint density,
the internal subtrees defined by the blocks remain asymptotically independent,
following the same binary-tree limit law as in the fixed-$\kappa$ case.
\paragraph{}
The limiting partition of \(L\) obtained in the regime
\(\kappa\sim c\sqrt N\) is exactly the \(\tfrac12\)-stable
Poisson--Kingman partition of Pitman~\cite{Pitman2003PK}.

 In Gnedin and Pitman terminology~\cite{GnedinPitman2006},
it is the Gibbs partition with block weights $w_m=(2m-3)!!$
and explicitly computable coefficients $I_{l,r}(c)$. The coefficients satisfy the normalization $\sum_{r=1}^l C_{l,r}\,I_{l,r}(c)=1$.\\

We show in section 4 that this scale $\kappa\sim c\sqrt N$
 is critical.
If $\kappa=o(\sqrt N)$, the induced partition of $L$
is asymptotically trivial: with high probability
all marked vertices lie in a single component.
If $\kappa\gg\sqrt N$, the opposite triviality holds:
the induced partition is asymptotically the discrete partition,
and the graph distance from each marked vertex to $\Delta$
is $o(\sqrt N)$.

\noindent

%We also record a continuous-time edge-cutting interpretation: after critical
%rescaling, cutting internal edges at rate \(N^{-1/2}\) shifts the parameter
%\(c\) and yields an explicit finite-dimensional fragmentation/coalescence
%equation on marked bouquets.
We also describe in Section~5 a continuous-time edge-cutting dynamics. Under
the critical scaling, its fixed-time marginals are obtained by the simple
parameter shift \(c\mapsto c+a\). At the metric-bouquet level, this gives a
natural fragmentation/coalescence picture.
 Finally, note that a multi-cemetery extension could be treated similarly. Wilson's
algorithm can be run in such a way that whenever a loop-erased walk is killed,
one records which cemetery point was used. This produces a spanning forest
whose components are colored by cemetery labels. Here we focus on the
single-root case, since multitype versions of the construction and of the
asymptotic analysis require no new ideas.

The relation with previous work is discussed in the final section.

\section{The fixed-\(\kappa\) regime}%\paragraph{Definitions, first properties and statement}
\paragraph{Green determinants.}
 Consider a graph $\mathcal G$ equipped with conductances  and a non-vanishing killing function defined on the set of vertices, here denoted by $X$. These data allow to define a Green matrix $G $ indexed by  pairs of vertices. Adding a cemetery point   $\Delta$, we can consider the value of the killing function at any vertex $x$ as a conductance between $x$ and $\Delta$. Then a spanning forest of rooted trees on $\mathcal G$ can be identified to a spanning tree on the extended graph $\mathcal G_{\Delta}$, rooted at $\Delta$.   An extension of Cayley's theorem (cf. for example \cite{stfl}, section 8-2 in \cite{book}) known as the (weighted) matrix-tree theorem shows that if we define the weight of such a spanning tree to be the product of the conductances of its edges, the sum of these weights is the inverse of the determinant of $G$. It provides  naturally a probability $\mathbb P$ on spanning trees rooted at $\Delta$ (and consequently a probability on spanning forests.) \\
  This applies in particular to the complete graph $K_N$ with vertices $\{1,2,...,N\}$ endowed with unit conductances and a constant killing function $\kappa$.
 The corresponding probability 
  on $\Delta$-rooted spanning trees will be denoted by $\mathbb P^{(N),\kappa}$. If $\kappa=1$, the spanning tree rooted at $\Delta$ can be identified with a uniform spanning tree on $K_{N+1}$. Local limits for these objects have been determined in \cite{dnm}, extending a result of Grimmett \cite{Grim}. \\

 The expression of the Green matrix $G^{(N),\kappa}$ is:
\[
\frac{1}{\kappa (N+\kappa)}(\kappa I+J)
\]
where $J$ denotes the $(N,N)$ matrix with all entries equal to $1$. 
It is easy to check that for any $(d,d)$  square matrix $M$ with diagonal entries equal to $a+b$ and off diagonal entries equal to $b$, $\det(M)=a^{d-1}(a+db)$. 
Hence we get the following identity:
 \begin{equation}
\det(G^{(N),\kappa})=\frac{1}{\kappa (N+\kappa)^{N-1}}
 \end{equation}
 
 Moreover, for $0<d<N$,
  \begin{equation}\label{lemdet}
\det(G^{(N),\kappa}_{i,j},\, 1\leq i,j\leq d)= \frac{d+\kappa}{\kappa(N+\kappa)^d}
 \end{equation}

 \paragraph{Wilson's algorithm.} Coming back to the general case,  a graph equipped with conductances and a killing function is naturally associated with a random walk. In the complete graph case, at each step, the  random walk is killed with probability $\frac{\kappa}{\kappa+N-1}$ and otherwise jumps to a randomly chosen different vertex.\\  A simple way to sample a spanning tree $\Upsilon$ under the probability $\mathbb P$ is to perform Wilson's algorithm (cf. \cite{wil}) which is based on loop erasure. The probability of a loop-erased path $\xi$ from a given vertex $\xi_0$ to $\Delta$ is the product of its conductances multiplied by the determinant of the restriction of the Green matrix to the vertices of $\xi$. Wilson's algorithm starts by choosing any order on the vertices and constructing a loop erased path from the first vertex to $\Delta$. Then at each step it constructs a loop erased path from the first unused vertex to the tree of used vertices (including $\Delta$) until all vertices have been used.\\We are interested in the smallest subtree $\Upsilon_L$ of $\Upsilon$ connecting a set $L$ of $l$ vertices and $\Delta$. It is a tree embedded in the graph, rooted at $\Delta$ containing $L$ and whose set of leaves, denoted by $L^*$, is included in $L$. We say that such a tree  is a $ L$-tree so that  $\Upsilon_L$ is always a $ L$-tree.  To sample $\Upsilon_L$, we can run Wilson's algorithm after choosing an order in which the vertices of $L$ are the first $l$ vertices. Then, \emph{the spanning tree  produced by the algorithm once all vertices of $L$ have been covered is exactly $\Upsilon_L$.}  This happens in at most $l$ steps.\\
  
  It is easy to check from the proof of Wilson’s algorithm given in \cite{book} that for any tree $Y$ rooted at $\Delta$,  containing $L$ and whose set of leaves is included in $L$, the probability $\mathbb P(Y= \Upsilon_L)$ is given by the product of the conductances of the edges of $Y$ multiplied by the determinant of the restriction of the Green matrix to the vertices of $Y\setminus \{\Delta\}$.\\
Consequently, in the case of the complete graph with $N$ vertices $\{1,...,N\}$ endowed with unit conductances and a constant killing factor $\kappa$, if $Y$ has $d$ vertices (root excepted) and $r$ edges incident to $\Delta$, by formula \ref{lemdet}, we have: 
 
 \begin{lemma}\label{lemdet2}
$\mathbb{P}^{(N),\kappa}(\Upsilon_L=Y)=\frac {\kappa^{r-1}(d+\kappa)}{(N+\kappa)^d} $
\end{lemma}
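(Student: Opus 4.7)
The plan is to combine the two ingredients the author has just established: (i) the general Wilson-algorithm identity stating that $\mathbb{P}(Y=\Upsilon_L)$ equals the product of edge conductances of $Y$ times the determinant of the Green matrix restricted to the non-root vertices of $Y$, and (ii) the explicit determinant formula (\ref{lemdet}).

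First I would separate the edges of $Y$ into two classes. Since $Y$ is a tree on the vertex set $V(Y)\cup\{\Delta\}$ with $d+1$ vertices, it carries exactly $d$ edges. Of these, $r$ are incident to $\Delta$, and the remaining $d-r$ lie entirely inside $K_N$. Under the chosen weighting, each edge inside $K_N$ has conductance $1$, while each edge joining a vertex to $\Delta$ has conductance equal to the killing value $\kappa$. Hence the product of conductances of the edges of $Y$ is simply $\kappa^{r}\cdot 1^{d-r}=\kappa^{r}$.

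Next I would apply (\ref{lemdet}) to evaluate the determinant of the Green matrix restricted to the $d$ non-root vertices of $Y$. The key observation is that the formula (\ref{lemdet}) depends only on the \emph{number} $d$ of vertices to which we restrict, not on the identity of those vertices; this is a consequence of the complete symmetry of $G^{(N),\kappa}=\tfrac{1}{\kappa(N+\kappa)}(\kappa I+J)$, whose diagonal entries all equal $\tfrac{1+\kappa}{\kappa(N+\kappa)}$ and whose off-diagonal entries all equal $\tfrac{1}{\kappa(N+\kappa)}$. Thus any $d\times d$ principal minor of $G^{(N),\kappa}$ has determinant $\frac{d+\kappa}{\kappa(N+\kappa)^{d}}$.

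Finally, multiplying the two contributions yields
\[
\mathbb{P}^{(N),\kappa}(Y=\Upsilon_L)\;=\;\kappa^{r}\cdot\frac{d+\kappa}{\kappa(N+\kappa)^{d}}\;=\;\frac{\kappa^{r-1}(d+\kappa)}{(N+\kappa)^{d}},
\]
which is the claimed identity. There is no real obstacle here: the statement is essentially a bookkeeping exercise combining the Wilson identity with the symmetry of $G^{(N),\kappa}$. The only point worth emphasizing, to avoid confusion, is that the factor $\kappa$ enters \emph{both} as the edge conductance to $\Delta$ \emph{and} through the determinant formula, and it is the clean cancellation between these two occurrences that produces the asymmetric-looking exponent $r-1$ in the numerator.
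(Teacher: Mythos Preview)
Your proof is correct and follows exactly the paper's approach: invoke the Wilson-algorithm identity (probability equals product of edge conductances times the restricted Green determinant), compute the product as $\kappa^r$, and apply formula~(\ref{lemdet}) to obtain the determinant $\frac{d+\kappa}{\kappa(N+\kappa)^d}$. The paper states this even more tersely, but the content is identical.
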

\paragraph{Reduced trees and lengths.}
A  spanning tree defines a map $p$ from $X$ into $X\cup \{\Delta\}$ fixing $\Delta$ and such that for any vertex $x$ in $X$ and positive integer $m$, $p^m(x)\neq x$. If $p(y)=x$, we say that $y$ is a child of $x$. If  $p^m(y)=x$ for some non-negative (positive) integer $m$, we say that $y$ is above (strictly above) $x$ and that $x$ is below (strictly below) $y$. A  node is by definition a vertex with at least two children. These notions are extended to any tree embedded in the graph and rooted at $\Delta$ when we replace $X$ by the set of tree vertices. \\
%Let us come back to the case of the complete graph $K_N$.\\
% We say that a tree embedded in the complete graph and rooted at $\Delta$ is a $ L$-tree if it contains  $ L$ and his set of leaves, denoted by $L^*$, is contained in $L$.  $\Upsilon_L$ is always a $ L$-tree.  
We denote by $\mathcal N$ the set of nodes of $\Upsilon _L$. There is a mapping $j$ from $\mathcal N\cup L$ into parts of $ L$. It maps a vertex $x$ to the set of elements of $L$  which are above it in the tree. In particular, it sends a leaf $i$ into $\{i\}$. We also set  $j(\Delta)=L$.\\
 We say that a $L$ tree is reduced if all its vertices which are not elements of $L$ or nodes are removed. A $L$-tree can be decomposed into its $L$-reduction (which, in the case of the complete graph, is still a $L$ -tree) and finite sequences of intermediate internal vertices with a single child. \\
 
Let us now consider the case of the complete graph. It is clearly enough to consider the case $L=\{1,...l\}$. Then the natural  order on $L$ is used to  define an order on the children of each internal node: we order these children by the minimum element of the set of leaves above them.
 (In the binary case this simply fixes a canonical
left/right orientation at each branch point.)
This turns $Y$ into a plane rooted tree in a deterministic way.\\
 Perform the depth-first contour traversal of the
resulting plane rooted tree, starting and ending at $\Delta$; this contour visits each
edge exactly twice and defines an order on all vertices of $Y$.

%Nodes and leaves of a reduced $L$-tree inherit of that order we will call the $L$-order. \\ 

In order to capture the way elements of $L$ are connected, in other words the geometry of the subtree connecting them, only the labels in $L$ do matter. This leads us to define an equivalence relation: we say that two $L$-trees are equivalent if they can be transformed into one another by a permutation of $ \{l+1,...,N\}$. Note that equivalence preserves the order we defined on $\mathcal N\cup L$. The (equivalence) class of the $L$-reduction of $\Upsilon_L$ denoted by $Q_L$ inherits a tree structure. Its internal vertices are in one to one correspondence with $\mathcal N\cup L\setminus L^*$ and its leaves with $L^*$. The class of \(\Upsilon_L\) is determined by \(Q_L\) together with an extension vector indexed by the vertices
\(\mathcal N\cup L\) of the reduced tree other than \(\Delta\). For
\(x\in\mathcal N\cup L\), let \(e_x\) be the edge of the reduced tree ending
at \(x\), oriented away from \(\Delta\). We define
$u_L(x)$ as the number of unlabelled vertices of \(\Upsilon_L\) lying strictly between the
endpoints of the segment corresponding to \(e_x\) plus one added if  $x \notin L$.
%+\mathbf 1_{\{x\notin L\}\).
%\(u_L:\mathcal N \cup L\to\mathbb N\), where \(u_L(x)\) is the number of intermediate unlabelled
%vertices of \(\Upsilon_L\) lying on the segment corresponding to the edge of the reduced tree (oriented towards the leaves) ending at $x$ with $1$ added if  $x \notin L$.
 Thus, with this convention,
\[
\sum_{e\in E(Q_L)}u_L(e)
=
|\mathrm{Vert}(\Upsilon_L)\setminus(L\cup\{\Delta\})|.\]
We can say that $(Q_L,u_L)$ is a $\mathbb N$-extension of $Q_L$.\\
 
  We say that a $L$-tree is binary iff $L^*=L$, every internal vertex has exactly two children and the root only one. An easy induction shows that the set $\mathcal  B_l$ of binary $L$-tree classes has cardinality $c_l=\prod_{i=1}^{l-1} (2i-1)= \frac{(2l-3)!}{2^{l-2}(l-2)!}$ and that these trees have $l-1$ internal vertices. Indeed, adding one leaf is done by choosing a vertex, adding  a node just below it and connecting it to the new leaf. Consider now binary tree extensions: using the contour order on leaves and nodes, $u_L$  becomes a $(2l-1)$-tuple of non-negative integers $(u_L(i), 1\leq i \leq 2l-1)$, which are positive except possibly for those corresponding to leaves. $\sum_i u_L(i) $ is the number of inner vertices of $\Upsilon _L$. \\ 
 
\paragraph {Asymptotics.}We can now formulate precisely our problem which is to show that as $N$ increases to infinity, the probability that the reduction  of $\Upsilon_L$ is binary converges to one, and to determine the asymptotic behavior of the joint distribution of the pair $(Q_L,u_L)$. This is given in the following:

\begin{theorem}\label{blu}
a) $ \mathbb P^{(N),\kappa}( L^*=L)$ converges to 1 as $N\uparrow \infty$.\\
b) Given $\alpha \in \mathcal B_l$, $$\lim_{N\uparrow \infty} \mathbb P ^{(N),\kappa} (Q_L=\alpha)=c_l^{-1}.$$
 i.e. the distribution of $Q_L$ converges towards the uniform distribution on $\mathcal B_l$.\\
c) Given $t\in  (0,\infty)^{2l-1}$, $$\lim_{N\uparrow \infty}N^{(2l-1)/2} \mathbb P^{(N),\kappa} (Q_L=\alpha, u_L=[t\sqrt N])=(\sum_1^{2l-1}t_i)e^{-(\sum_1^{2l-1}t_i)^2/2}$$ in which $[t\sqrt N]$ denotes the $2l-1$-tuple of integers
 $([\sqrt N t_1],...,[\sqrt N t_{2l-1}])$.\\
\end{theorem}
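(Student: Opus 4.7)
The approach reduces the three claims to a single local limit theorem for the finite-$N$ probability, obtained from an asymptotic expansion of an explicit formula derived from Lemma~\ref{lemdet2}. First, fix $\alpha\in\mathcal B_l$ and $u=(u_1,\dots,u_{2l-1})\in\NN^{2l-1}$; I count the embedded $L$-trees in the equivalence class $\{Q_L=\alpha,\,u_L=u\}$. Such a tree has $d=2l-1+\sum_i u_i$ vertices besides $\Delta$, namely the $l$ leaves of $L$, the $l-1$ branchpoints and $\sum_i u_i$ degree-two intermediate vertices; its $d-l$ non-leaf positions are filled by distinct labels from $\{l+1,\dots,N\}$, giving $(N-l)!/(N-d)!$ embedded trees. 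A binary $L$-tree has $r=1$, so Lemma~\ref{lemdet2} yields
\[
\PP^{(N),\kappa}(Q_L=\alpha,\,u_L=u)=\frac{(N-l)!}{(N-d)!}\cdot\frac{d+\kappa}{(N+\kappa)^d}.
\]

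For part~(c), substitute $u_i=[t_i\sqrt N]$ and set $S=\sum_i t_i$, so that $s=\sum_i u_i=\sqrt N\,S+O(1)$ and $d=\sqrt N\,S+O(1)$. A direct log-expansion of the formula gives
\[
\log\frac{(N-l)!}{(N-d)!}=(d-l)\log N-\sum_{k=l}^{d-1}\frac{k}{N}+o(1)=(d-l)\log N-\frac{S^2}{2}+o(1),
\]
using $(d-l)(d+l-1)/(2N)\to S^2/2$; together with $d\log(N+\kappa)=d\log N+o(1)$ (the $d\kappa/N$ term vanishes for fixed $\kappa$) and $\log(d+\kappa)=\tfrac12\log N+\log S+o(1)$, the $d\log N$ contributions cancel and
\[
\log\!\bigl[N^{(2l-1)/2}\PP^{(N),\kappa}(Q_L=\alpha,\,u_L=[t\sqrt N])\bigr]\longrightarrow\log S-\tfrac{S^2}{2},
\]
which is exactly (c).

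For~(b), write $\PP^{(N),\kappa}(Q_L=\alpha)=\sum_u\PP(\cdots)=\int_{\RR_+^{2l-1}}f_N(t)\,dt$, where $f_N$ is the step function equal to $N^{(2l-1)/2}\PP^{(N),\kappa}(Q_L=\alpha,\,u_L=[t\sqrt N])$ on each cell of mesh $N^{-1/2}$. Using $1-x\le e^{-x}$ in the falling factorial and $(N+\kappa)^d\ge N^d$ produces a uniform envelope $f_N(t)\le C(1+S)e^{-S^2/2}$ with $C=C(\kappa,l)$, which is integrable on $\RR_+^{2l-1}$. Dominated convergence combined with the elementary identity
\[
\int_{\RR_+^{2l-1}}Se^{-S^2/2}\,dt=\frac{1}{(2l-2)!}\int_0^\infty s^{2l-1}e^{-s^2/2}\,ds=\frac{2^{l-1}(l-1)!}{(2l-2)!}=c_l^{-1}
\]
yields $\PP^{(N),\kappa}(Q_L=\alpha)\to c_l^{-1}$, which is~(b). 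Summing over the $c_l$ elements of $\mathcal B_l$ gives total limiting mass~$1$, and since $\{Q_L\in\mathcal B_l\}\subset\{L^*=L\}$, part~(a) follows.

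The log-expansion in the second paragraph is routine; the main obstacle is the \emph{uniform} envelope in the third. The Gaussian decay $e^{-s^2/(2N)}$ extracted from the falling factorial must be sharp enough to control all integer vectors $u$, not just those in the $\sqrt N$-window, so that very long configurations contribute a tail with genuine Gaussian decay and very short ones are absorbed into the constant $C$. Once this bound is in place, no separate argument for non-binary reductions is needed: the identity $\sum_\alpha\int Se^{-S^2/2}\,dt=1$ squeezes them out automatically.
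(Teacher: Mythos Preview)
Your proof is correct and shares the paper's core: the explicit formula from Lemma~\ref{lemdet2}, the same log-expansion for (c), the same normalising integral $\int Se^{-S^2/2}\,dt=c_l^{-1}$, and the same derivation of (a) from (b). The genuine difference is in how you pass from (c) to (b). The paper establishes the pointwise asymptotic \emph{uniformly only on bounded cubes} $[0,\sigma_{\max}]^{2l-1}$, recognises the resulting sum as a Riemann sum, lets $\sigma_{\max}\to\infty$ by monotone convergence, and obtains only a \emph{lower bound} $\liminf\PP(Q_L=\alpha)\ge c_l^{-1}$; equality is then forced because the $c_l$ probabilities sum to at most~$1$. You instead produce a global integrable envelope and apply dominated convergence directly. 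Your route is more self-contained (it yields convergence, not just a liminf) and does not rely on the probabilistic squeezing, but it requires the envelope to hold for \emph{all} $u$, not only those in a bounded window. One small point: the bound $f_N(t)\le C(1+S)e^{-S^2/2}$ is slightly too optimistic as written, because from $s\ge S\sqrt N-(2l-1)$ you only get $e^{-s^2/(2N)}\le e^{-(S-a)^2/2}$ for a constant $a=a(l)$, i.e.\ an envelope of the form $C(1+S)e^{-(S-a)^2/2}$ (or $C(1+S)e^{-cS^2}$ for some $c<\tfrac12$). This is still integrable, so dominated convergence goes through; just adjust the stated envelope accordingly.
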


\begin{proof}
Note that a) follows directly from b) since there are $c_l$ binary trees with $l$ labeled leaves.\\
Then we start with the proof of c). Note that setting $\Sigma=\sum_1^{2l-1}[t_i\sqrt N]$
%+l-1$,
 there are $\prod_{i=0}^{\Sigma-1}(N-l-i)$ possible choices for the internal vertices of $\Upsilon_L$, given that $u_L=[t\sqrt N]$.
From lemma \ref{lemdet2}, we get that $$\mathbb P^{(N),\kappa} (Q_L=\alpha, u_L=[t\sqrt N])=\frac{\Sigma+l+\kappa}{(N+\kappa)^{\Sigma+l}}\prod_{i=0}^{\Sigma -1}(N-l-i)$$ (with the usual convention that an empty product equals 1).

We define
\[
L_N:=\log(\frac{\Sigma+l+\kappa}{(N+\kappa)^{\Sigma+l}}\prod_{i=0}^{\Sigma-1}(N-l-i)),\]

\begin{lemma}
Set $\Sigma=\sigma\sqrt N$, and assume that
$0\le \sigma\le \sigma_{\max}$, with \(l\) and \(\kappa\) fixed.  
There exists a constant \(C>0\)  (depending on
\(\sigma_{\max},l,\kappa\)) such that for all 
\(\sigma\in[0,\sigma_{\max}]\), and $N$ large enough:
\[
\left|\,L_N-\Bigl(\log(\Sigma+l+\kappa)-l\log N-\frac{\sigma^2}{2}\Bigr)\right|
\le \frac{C}{\sqrt N}.
\]
\end{lemma}
\begin{proof}

\[
L_N=\log(\Sigma+l+\kappa)-(\Sigma+l)\log(N+\kappa)
+\sum_{i=0}^{\Sigma-1}\log(N-l-i),
\]
with $\Sigma=\sum_{i=1}^{2l-1}k_i$ and
$\Sigma\le \sigma_{\max}\sqrt N$.

\medskip
\noindent
Rewrite
\[
\log(N+\kappa)=\log N+\log\!\left(1+\frac{\kappa}{N}\right),
\qquad
\log(N-l-i)=\log N+\log\!\left(1-\frac{l+i}{N}\right),
\]
so that
\[
L_N=\log(\Sigma+l+\kappa)-l\log N
-(\Sigma+l)\log\!\left(1+\frac{\kappa}{N}\right)
+\sum_{i=0}^{\Sigma-1}\log\!\left(1-\frac{l+i}{N}\right).
\]
First

\[
\left|(\Sigma+l)\log\!\left(1+\frac{\kappa}{N}\right)\right|
\le
(\Sigma+l)\frac{\kappa}{N}\le
\frac{\kappa(\sigma_{\max}+l)}{\sqrt N}
:=
\frac{C_1}{\sqrt N}.
\]
%where $C_1=\kappa(\sigma_{\max}+l))$.\\

\medskip
\noindent
%For $0\le x\le 1/2$,
Note that on $(0, 1/2]$,
\[
\log(1-x)=-x-\varepsilon(x)
\quad \text{with}\quad
0\le \varepsilon(x) \le x^2
\]
and set $x_i=(l+i)/N$. Assuming $N$ large enough so that $l+\sigma_{max}\sqrt N\le N/2$,
\[
\sum_{i=0}^{\Sigma-1}\log\!\left(1-\frac{l+i}{N}\right)
=
-\frac1N\sum_{i=0}^{\Sigma-1}(l+i)
-\sum_{i=0}^{\Sigma-1}\varepsilon(x_i).
\]
\medskip
\noindent
We have
\[
0\le \sum_{i=0}^{\Sigma-1}\varepsilon(x_i)
\le
\sum_{i=0}^{\Sigma-1}\left(\frac{l+i}{N}\right)^2
\le
\frac{1}{ N^2}\,\Sigma\,(l+\Sigma)^2
\le
\frac{C_2}{\sqrt N},
\]
with $C_2=\sigma_{\max}(l+\sigma_{\max})^2$.\\

\noindent
Moreover
\[
\frac1N\sum_{i=0}^{\Sigma-1}(l+i)
=
\frac1N\left[\Sigma l+\frac{\Sigma(\Sigma-1)}2\right]=
\frac{\Sigma^2}{2N}
+\frac{\Sigma l}{N}
-\frac{\Sigma}{2N}.
\]
\medskip
\noindent
Hence
\[
\left|
\frac1N\sum_{i=0}^{\Sigma-1}(l+i)
-
\left(\frac{\Sigma^2}{2N}\right)
\right|
\leq
\frac{\Sigma}{2N}+\frac{\Sigma l}{N}
\le
\frac{\sigma_{\max}+2l}{2\sqrt N}
=: \frac{C_3}{\sqrt N}.
\]

\medskip
\noindent
Therefore defining
\[
r_N:=L_N-
\left(\log(\Sigma+l+\kappa)-l\log N-\frac{\Sigma^2}{2N}\right)\]

\[ \left| r_N \right|
\le
\frac{C}{\sqrt N}
\]
with $C=C_1+C_2+C_3$,
uniformly for $\Sigma\le\sigma_{\max}\sqrt N$ and $N$ large enough.
\end{proof}

\medskip
\noindent
%Write
%\[
%\delta_N:=L_N-\Bigl(\log(\Sigma+l+\kappa)-l\log N-\frac{\Sigma^2}{2N}\Bigr),
%\qquad |\delta_N|\le \frac{C}{\sqrt N}.
%\]
Then
\[
e^{L_N}=(\Sigma+l+\kappa)\,N^{-l}e^{-\Sigma^2/(2N)}\,e^{r_N}.
\]
For $N$ large enough so that $C/\sqrt N\le 1$, we have $|e^{r_N}-1|\le 2|r_N|$, hence
\[
\left|e^{L_N}-(\Sigma+l+\kappa)\,N^{-l}e^{-\Sigma^2/(2N)}\right|
\le \frac{2C}{\sqrt N}\,(\Sigma+l+\kappa)\,N^{-l}e^{-\Sigma^2/(2N)},
\]
uniformly for all $\sigma\in[0,\sigma_{\max}]$.
 Moreover, 
 for all $N$ sufficiently large and all admissible
$k\in\mathbb N^{2l-1}$ satisfying $\sum k_i\le \sigma_{\max}\sqrt N$ ( and $k_i >0$ on internal nodes),
\[
\left|
N^{(2l-1)/2}
\mathbb P^{(N),\kappa}(Q_L=\alpha,u_L=k)
-
\left(\sum_{i=1}^{2l-1}\frac{k_i}{\sqrt N}\right)
\exp\!\left(-\frac{(\sum_{i=1}^{2l-1}k_i)^2}{2N}\right)
\right|
\le \frac{C'}{\sqrt N}.
\]
with $C'\le (2C\sigma_{max}+1)$ This concludes the proof of c).\\
\medskip
\noindent
To prove b), we first compute the normalizing integral:
\[
\int_{\mathbb R_+^{2l-1}}
\left(\sum_{i=1}^{2l-1} t_i\right)
\exp\!\left(-\frac{(\sum_{i=1}^{2l-1} t_i)^2}{2}\right)
dt
=
c_l^{-1}.
\]
\medskip
\noindent
Fix $\sigma_{\max}>0$. By the uniform estimate obtained in part (c),
\begin{align*}
\liminf_{N\to\infty}
\mathbb P^{(N),\kappa}(Q_L=\alpha)
&\ge
\liminf_{N\to\infty}
\sum_{\substack{k\in\mathbb N^{2l-1}\\ \sum k_i\le \sigma_{\max}\sqrt N}}
\mathbb P^{(N),\kappa}(Q_L=\alpha,u_L=k)
\\
&=
\liminf_{N\to\infty}
N^{-(2l-1)/2}
\sum_{\substack{k\in\mathbb N^{2l-1}\\ \sum k_i\le \sigma_{\max}\sqrt N}}
\left(\sum_{i=1}^{2l-1}\frac{k_i}{\sqrt N}\right)
\exp\!\left(-\frac{(\sum_{i=1}^{2l-1}k_i)^2}{2N}\right).
\end{align*}
\medskip
\noindent
The last sum is the Riemann sum, with mesh size \(h_N=1/\sqrt N\), over the
simplex
\[
\Delta_{\sigma_{\max}}
:=
\left\{
t\in\mathbb R_+^{2l-1}:\ \sum_{i=1}^{2l-1}t_i\le \sigma_{\max}
\right\}.
\]
Therefore,
\[
\liminf_{N\to\infty}
\mathbb P^{(N),\kappa}(Q_L=\alpha)
\ge
\int_{\Delta_{\sigma_{\max}}} f(t)\,dt .
\]
\medskip
\noindent
Letting $\sigma_{\max}\to\infty$, monotone convergence gives
\[
\liminf_{N\to\infty}
\mathbb P^{(N),\kappa}(Q_L=\alpha)
\ge
\int_{\mathbb R_+^{2l-1}} f(t)\,dt
=
c_l^{-1}.
\]
\medskip
\noindent
Since there are exactly $c_l$ binary shapes, we must have equality:
\[
\lim_{N\to\infty}
\mathbb P^{(N),\kappa}(Q_L=\alpha)
=
c_l^{-1}.
\]
This completes the proof of b).
\end{proof}

 \paragraph{Additional comments}
 \medskip
\noindent
 1) It follows from a) that any finite set of vertices asymptotically belong to the same tree of the spanning forest. \\
 
\medskip
\noindent 
2) Taking $l=1$, we see that in particular, as $N$ increases to infinity, the distribution of the graph distance of any fixed vertex to the root rescaled by $\sqrt N$ converges to the density $xe^{-x^2/2}$. Note that the scaling by $\sqrt N$ appeared already in \cite{Szkr}.
\\

\medskip
\noindent
3) If $\kappa=1$, the result can be interpreted as giving the asymptotic distribution of the subtree connecting $l+1$ vertices in the unrooted uniform spanning tree on $K_{N+1}$. \\

 \medskip
\noindent
 4) Note that for finite $N$, $Q_L$ can be non-binary but the corresponding  probability tends to 0 as $\uparrow \infty$.  \\
 
\medskip
\noindent 
5)  Note that the limiting distribution we get is independent of $\kappa$.
On the other hand, it is shown in \cite{book}, among other results, that the probability  for two vertices to belong to different trees is equivalent to $\frac{\kappa\sqrt \pi}{\sqrt{2n}}$ and that the  probability  for two vertices to be on the same branch starting from the root is equivalent to $\frac{\sqrt 2 \pi}{\sqrt{n}}$. A more general result could be looked for, considering all types of atypical topologies for $Q_L$\\

\section{The critical case: $\kappa=c\sqrt N$}

We now consider the case where the killing parameter depends on $N$ and satisfies
\[
\kappa=\kappa_N=c\sqrt N,
\qquad c>0 \ \text{fixed}.
\]

\subsection{Statement of the main result}

In contrast with the previous regime (where $\kappa$ is fixed), we will prove that as $N\uparrow \infty$ the subtree $\Upsilon_L$ connecting $L$ to $\Delta$ is now in general a bouquet of several disjoint binary trees rooted at
 $\Delta$. We say that a $L$-tree is a bouquet of $r$ binary trees iff $L^*=L$, every internal vertex has exactly two children and the root exactly $r$.
 
\medskip
\noindent
Let $\pi=\{B_1,\dots,B_r\}$ be a partition of $L=\{1,\dots,l\}$ into $r$
non–empty blocks, with $|B_j|=l_j$. The configurations  we consider
are the bouquets  of $r$ binary $B_j$-trees rooted at $\Delta$. Their set of leaves is exactly $L$.
For each block $B_j$ (with $|B_j|=l_j$), let $\alpha_j\in\mathcal B_{l_j}$
be a binary $B_j$–tree class.
 The corresponding $Q_L$ can then be identified with the $r$-tuple $(\alpha_j,\; j=1,...r)$.  Recall that we denote by \(\Pi_N\) the partition of 
\(L\) induced by $Q_L$.\\
 %of $L$- reduced binary trees classes.\\

\medskip
\noindent
As in the case $r=1$, to fully determine the class of $\Upsilon_L$, the $Q_L$'s are completed by $\mathbb N$–extensions:
As before, we denote by  $u=(u_i)_{1\le i\le 2l-r}$  
the numbers of unlabelled vertices lying on the corresponding reduced-edge
segments
% we have:the numbers of unlabelled vertices lying on the corresponding reduced-edge
%\[
%\sum_{j=1}^r (2l_j-1)=2l-r,
and set $\Sigma=\sum_{i=1}^{2l-r} u_i$.
%\qquad
%a=l+\Sigma.
%\]

\medskip
\noindent
By Lemma~\ref{lemdet2}, the probability of a given configuration
$(\pi,(\alpha_j)_{1\le j\le r},u)$ is
\begin{equation}\label{eq:forest-weight}
\mathbb P^{(N),\kappa}
(\pi,(\alpha_j),u)
=
\frac{\kappa^{r-1}(l+\Sigma+\kappa)}
     {(N+\kappa)^{l+\Sigma}}
\prod_{i=0}^{\Sigma-1}(N-l-i).
\end{equation}
An easy recursion shows that 
\[
C_{l,r}
=
\sum_{\pi\in\mathcal P_{l,r}}
\prod_{B\in\pi} c_{|B|}
\]
is the number of classes of bouquets of $r$ binary trees whose leaves are $L$
( with $\mathcal P_{l,r}$ denoting the set of partitions of $L$ into $r$ blocks).

 \begin{theorem}[Critical regime $\kappa=c\sqrt N$]\label{thm:critical}
Fix $l\ge 1$ and $c>0$, and set $\kappa=\kappa_N=c\sqrt N$.
Let $L=\{1,\dots,l\}$.
% and let $Q_L$ be the reduced object described above.

\smallskip
\noindent
%(a) \emph{Binary reduction.}
(a) \emph{Binary reduction.}
The probability that $\Upsilon_L$ is not a binary bouquet rooted at $\Delta$
tends to $0$ as $N\uparrow\infty$.

\smallskip
\noindent
(b) \emph{Limit law of reduced configurations.}
Let $\pi=\{B_1,\dots,B_r\}$ be a partition of $L$ into $r$ blocks and let
$\alpha_j\in\mathcal B_{|B_j|}$ be reduced binary tree classes on each block, as above.
Then
\[
\PP^{(N),c\sqrt N}\!\bigl(Q_L=(\pi,(\alpha_j)_{1\le j\le r})\bigr)
\longrightarrow
I_{l,r}(c),
\]
where  $I_{1,1}=1$ and for $l\ge 2$ $I_{l,r}(c)$ is given by:
%\begin{equation}
%I_{l,r}(c)=\frac{c^{\,r-1}}{(2l-r-1)!}
%\int_0^\infty
%(\sigma+c)\sigma^{2l-r-1}
%e^{-\frac{\sigma^2}{2}-c\sigma}
%\,d\sigma
%\end{equation}
%Or alternatively by
\begin{equation}\label{eq:Ilr-1d}
I_{l,r}(c)=\frac{c^{\,r-1}}{(2l-r-2)!}\int_0^\infty
s^{2l-r-2}\exp\!\Bigl(-\frac{s^2}{2}-cs\Bigr)\,ds.
\end{equation}
The limiting partition of $L$ has EPPF
\[
p(l_1,\dots,l_r)
=
I_{l,r}(c)\prod_{i=1}^r c_{l_i},
\]
which is the \(\tfrac12\)-stable Poisson--Kingman EPPF.\\
In particular, writing
\[
C_{l,r}
=
\sum_{\pi\in\mathcal P_{l,r}} \prod_1^rc_{l_i}
%\qquad (1\le r\le l),
\]
one has
\[
\PP^{(N),c\sqrt N}(|\Pi_N|=r)\longrightarrow C_{l,r}\,I_{l,r}(c),
\]
and conditionally on $|\Pi_N|=r$, the reduced configuration
$(\pi,(\alpha_j))$ is asymptotically uniform over the $C_{l,r}$ possibilities.

\smallskip
\noindent
(c) \emph{Limit for the extension vector.}
Let $(\pi,(\alpha_j))$ have $r$ blocks
% and let $u_L\in\NN^{2l-r}$ be the
%$\NN$--extension vector. 
For $t\in (0,\infty)^{2l-r}$, 
%write $u=[t\sqrt N]$
%(componentwise integer part) and 
set $\sigma=\sum_{i=1}^{2l-r} t_i$.
Then
\[
\lim_{N\to\infty}
N^{(2l-r)/2}\,
\PP^{(N),c\sqrt N}\!\bigl(Q_L=(\pi,(\alpha_j)),\ u_L=[t\sqrt N]\bigr)
=
c^{\,r-1}(\sigma+c)\,
\exp\!\Bigl(-\frac{\sigma^2}{2}-c\sigma\Bigr).
\]
\end{theorem}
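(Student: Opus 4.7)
The plan is to follow the strategy of Section~2: from the exact expression \eqref{eq:forest-weight} for $\mathbb P^{(N),\kappa}$ of a prescribed (reduced bouquet, extension vector), take logarithms and expand uniformly for $\Sigma\le\sigma_{\max}\sqrt N$; sum the resulting local limit over $\mathbb N$-extensions as a Riemann sum to identify $I_{l,r}(c)$; and use a normalisation identity to show that no limiting mass escapes onto non-binary-bouquet configurations. The one new feature compared to the fixed-$\kappa$ analysis is that $\log(1+\kappa/N)=c/\sqrt N+O(1/N)$ is now of order $1/\sqrt N$, so the term $(l+\Sigma)\log(1+\kappa/N)$ is no longer negligible and produces the extra $-c\sigma$ term in the exponent. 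Together with the prefactors $\kappa^{r-1}=c^{r-1}N^{(r-1)/2}$ and $l+\Sigma+\kappa\sim(\sigma+c)\sqrt N$, this is what gives the limit its $c$-dependence.

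For part (c), I would substitute $\kappa=c\sqrt N$ and $\Sigma=\sigma\sqrt N$ in \eqref{eq:forest-weight}, take $\log\mathbb P$, and expand each factor exactly as in the Section~2 computation, but now retaining the $O(1)$ contribution from $-(l+\Sigma)\log(1+c/\sqrt N)$ that was previously discarded. The four pieces collect to
\[
(r-1)\log c+\log(\sigma+c)+(r/2-l)\log N-c\sigma-\sigma^2/2+O(1/\sqrt N),
\]
the $\sigma\sqrt N\log N$ pieces coming from the denominator and the Pochhammer product cancelling exactly. Exponentiating and multiplying by $N^{(2l-r)/2}$ gives $c^{r-1}(\sigma+c)e^{-\sigma^2/2-c\sigma}+O(1/\sqrt N)$ uniformly for $t\in[0,\sigma_{\max}]^{2l-r}$, which is (c).

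For (b), summing this local estimate over $u\in\mathbb N^{2l-r}$ with $u_i\le\sigma_{\max}\sqrt N$ yields a Riemann sum of mesh $1/\sqrt N$ converging to $\int_{[0,\sigma_{\max}]^{2l-r}}c^{r-1}(\sigma+c)e^{-\sigma^2/2-c\sigma}\,dt$. Since the integrand depends only on $\sigma=\sum_i t_i$ and the fibre $\{t\in\mathbb R_+^{2l-r}:\sum_i t_i=\sigma\}$ has simplex volume $\sigma^{2l-r-1}/(2l-r-1)!$, letting $\sigma_{\max}\to\infty$ by monotone convergence produces the first integral formula for $I_{l,r}(c)$; the alternative formula follows from one integration by parts, using $(\sigma+c)e^{-\sigma^2/2-c\sigma}=-\frac{d}{d\sigma}e^{-\sigma^2/2-c\sigma}$. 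This gives the one-sided bound $\liminf_N \mathbb P^{(N),c\sqrt N}(Q_L=(\pi,(\alpha_j)))\ge I_{l,r}(c)$.

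Part (a) and equality in (b) then rest on the normalisation identity $\sum_{r=1}^l C_{l,r}I_{l,r}(c)=1$, which I consider the main obstacle. Once it is established, the nonnegative limiting masses of the $C_{l,r}$ binary-bouquet configurations already account for the full probability~$1$, so by the same soft argument as in Section~2 every non-binary reduced configuration must have vanishing limit (proving (a)), and every lower bound above is forced to be an equality. To prove the identity I would expand $\sum_r C_{l,r}c^{r-1}$ as a polynomial in $c$ using the recursive combinatorics of binary forests (in which $C_{l,r}$ is the partition-convolution of the double factorials $c_{|B|}=(2|B|-3)!!$), and reduce $\sum_r C_{l,r}I_{l,r}(c)$ via repeated integration by parts in $\sigma$ to the base case $\int_0^\infty(\sigma+c)e^{-\sigma^2/2-c\sigma}\,d\sigma=1$, which is the $l=1$ instance of the identity.
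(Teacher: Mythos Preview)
Your proposal is correct and follows essentially the same route as the paper: the same log-expansion for (c) (correctly isolating the new $-c\sigma$ contribution from $(l+\Sigma)\log(1+c/\sqrt N)$ and the prefactors $\kappa^{r-1}$, $l+\Sigma+\kappa$), the same Riemann-sum-plus-simplex-volume passage to $I_{l,r}(c)$ and the same liminf/normalisation scheme to force equality and deduce (a). For the normalisation identity $\sum_{r}C_{l,r}I_{l,r}(c)=1$ the paper makes your sketch concrete via the pair of recursions $A_{n+1}(c)=nA_{n-1}(c)-cA_n(c)$ (your integration by parts) and $C_{l,r}=(2l-r-2)C_{l-1,r}+C_{l-1,r-1}$ (your ``recursive combinatorics of binary forests''), combined into the bracket identity $(2l-r-2)I_{l,r}+I_{l,r+1}=I_{l-1,r}$ which yields $S_l(c)=S_{l-1}(c)$.
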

 
 \subsection{Proof}
 
 As in theorem \ref{blu}, the proof of (b) will be obtained from (c), 
with a Riemann-sum argument and a normalization identity. Once (b)
is known, (a) follows by summing over all bouquet configurations. We therefore focus on proving c).
 \noindent
\paragraph{Proof of c).}
Set $d=2l-r$ and let $t\in (0,\infty)^d$. Set $u_i=[t_i\sqrt N]$ and denote $\sigma=\sum_{i=1}^d t_i$ so that
$\Sigma=\sigma\sqrt N-\theta_N$, with $0\leq \theta_N\le d$.
Proceeding exactly as in the proof of part~c) of Theorem~1
(Taylor expansion of the logarithm and uniform error control on a  bounded
simplex), one obtains:

\begin{lemma}
For every $\sigma_{\max}>0$ there exist $C>0$  such that, uniformly for \(t\in\RR_+^d\) satisfying
\[
\sum_{i=1}^d t_i\le \sigma_{\max}:
\] and $N$ large enough,

\[
\Bigg|
N^{d/2}
\mathbb P^{(N),c\sqrt N}
\bigl(\pi,(\alpha_j),u=[t\sqrt N]\bigr)
-
c^{\,r-1}(\sigma+c)
e^{-\frac{\sigma^2}{2}-c\sigma}
\Bigg|
\le \frac{C}{\sqrt N}.
\]
\end{lemma}
\medskip
\noindent
In particular, for fixed $t$, as $N\uparrow\infty$:
\begin{equation}\label{eq:pointwise-limit}
N^{d/2}
\mathbb P^{(N),c\sqrt N}
\bigl(\pi,(\alpha_j),u=[t\sqrt N]\bigr)
\longrightarrow
c^{\,r-1}(\sigma+c)
e^{-\frac{\sigma^2}{2}-c\sigma}.
\end{equation}

\paragraph{Riemann sums and lower bound.}
Fix \(\sigma_{\max}>0\), and set
\[
\Delta_{\sigma_{\max}}^{(d)}
:=
\left\{
t\in\RR_+^d:\ \sum_{i=1}^d t_i\le \sigma_{\max}
\right\}.
\]
Summing the local estimate over all admissible
\(u\in\NN^d\) such that \(\sum_i u_i\le\sigma_{\max}\sqrt N\) gives, by the same Riemann-sum argument as in the fixed-$\kappa$ case
\[
\liminf_{N\to\infty}
\sum_{\substack{u\in\NN^d\\ \sum_i u_i\le \sigma_{\max}\sqrt N}}
\mathbb P^{(N),c\sqrt N}
(\pi,(\alpha_j),u)
\ge
\int_{\Delta_{\sigma_{\max}}^{(d)}}
c^{\,r-1}(\sigma+c)
e^{-\frac{\sigma^2}{2}-c\sigma}
\,dt,
\]
where
\[
\sigma=\sum_{i=1}^d t_i.
\]
Letting \(\sigma_{\max}\to\infty\) and using monotone convergence yields
\[
\liminf_{N\to\infty}
\mathbb P^{(N),c\sqrt N}
(\pi,(\alpha_j))
\ge
\int_{\RR_+^d}
c^{\,r-1}(\sigma+c)
e^{-\frac{\sigma^2}{2}-c\sigma}
\,dt .
\]

Since the integrand depends on \(t\) only through
\(\sigma=\sum_{i=1}^d t_i\), the simplex-volume identity gives
\[
\int_{\RR_+^d}
c^{\,r-1}(\sigma+c)
e^{-\frac{\sigma^2}{2}-c\sigma}
\,dt
=
\frac{c^{\,r-1}}{(d-1)!}
\int_0^\infty
(\sigma+c)\sigma^{d-1}
e^{-\frac{\sigma^2}{2}-c\sigma}
\,d\sigma.
\]
Since \(d=2l-r\), this is
\[
\frac{c^{\,r-1}}{(2l-r-1)!}
\int_0^\infty
(\sigma+c)\sigma^{2l-r-1}
e^{-\frac{\sigma^2}{2}-c\sigma}
\,d\sigma.
\]
For \(l\ge2\), an integration by parts gives the equivalent form
\[
I_{l,r}(c)
=
\frac{c^{\,r-1}}{(2l-r-2)!}
\int_0^\infty
\sigma^{2l-r-2}
\exp\!\left(-\frac{\sigma^2}{2}-c\sigma\right)\,d\sigma.
\]

%\medskip
%\paragraph{Riemann sums and lower bound.}
%Fix $\sigma_{\max}>0$.
%Summing over $u_i\le \sigma_{\max}\sqrt N$ and using the uniform estimate
%given in the previous lemma, we obtain, as in the proof of Theorem~1,
%\[
%\lim_{N\to\infty}
%\sum_{\substack{u\in\mathbb N^d\\ u_i\le\sigma_{\max}\sqrt N}}
%\mathbb P^{(N),c\sqrt N}
%(\pi,(\alpha_j),u)
%=
%\int_{[0,\sigma_{\max}]^d}
%c^{\,r-1}(\sigma+c)
%e^{-\frac{\sigma^2}{2}-c\sigma}
%\,dt.
%\]
%Letting $\sigma_{\max}\to\infty$ and using monotone convergence gives
%\begin{equation}\label{eq:forest-integral-ineq}
%\liminf_{N\to\infty}
%\mathbb P^{(N),c\sqrt N}
%(\pi,(\alpha_j))
%\ \ge\
%\int_{\mathbb R_+^d}
%c^{\,r-1}(\sigma+c)
%e^{-\frac{\sigma^2}{2}-c\sigma}
%\,dt.
%\end{equation}
%Since the integrand depends on $t$ only through
%$\sigma=\sum_{i=1}^d t_i$,
%the simplex–volume identity followed for $l\ge 2$ by an integration by parts gives
%
%\begin{equation}\label{eq:I-l-r}
%\int_{\mathbb R_+^{2l-r}}
%c^{\,r-1}(\sigma+c)
%e^{-\frac{\sigma^2}{2}-c\sigma}
%\,dt
%=
%\frac{c^{\,r-1}}{(2l-r-1)!}
%\int_0^\infty
%(\sigma+c)\sigma^{2l-r-1}
%e^{-\frac{\sigma^2}{2}-c\sigma}
%\,d\sigma
%\end{equation}
%
%\begin{equation*}
%=\frac{c^{\,r-1}}{(2l-r-2)!}\int_0^\infty
%s^{2l-r-2}\exp\!\Bigl(-\frac{s^2}{2}-cs\Bigr)\,ds:=I_{l,r}(c).
%\end{equation*}
In the base case $l=1$, $r=1$, and
\[
I_{1,1}(c)=\int_0^\infty (\sigma+c)e^{-\sigma^2/2-c\sigma}\,d\sigma
=
\Big[-e^{-\sigma^2/2-c\sigma}\Big]_{0}^{\infty}=1.
\]

\medskip
\noindent

Summing the previous inequality over all
partitions $\pi$ with $r$ blocks and all binary choices $(\alpha_j)$,
we obtain that the total limiting mass of configurations with $r$
components is larger or equal to:
\[
M_{l,r}(c)
=
C_{l,r}\, I_{l,r}(c).
\]

Summing over $r=1,\dots,l$,
we obtain
\[
\liminf_{N\to\infty}
\sum_{r=1}^l
\mathbb P^{(N),c\sqrt N}
(\text{$r$ components})
\ \ge\
\sum_{r=1}^l C_{l,r}\,I_{l,r}(c).
\]
We have:
\begin{lemma}\label{zoz}
 $\sum_{r=1}^l C_{l,r}\,I_{l,r}(c)=1.$
\end{lemma}
It stems from the fact that $\prod_{i=1}^rc_{l_i }I_{l,r}(c)$ is an exchangeable partition probability function known as the $\frac 1 2$- stable Poisson-Kingman EPPF (see \cite{Pitman2003PK}, \cite{Pitman2006}, \cite{GnedinPitman2006}). For self-containedness, we present a short elementary proof below.\\
Since probabilities cannot sum to more than $1$,
it follows that all the above inequalities are in fact equalities.
In particular that:
\[
\lim_{N\to\infty}
\mathbb P^{(N),c\sqrt N}
(\pi,(\alpha_j))
=I_{l,r}(c).
\]
% implies that no probability can escape outside the class of binary reduced forests.
Adding on all binary trees configurations yields the result on block sizes.
\qed

\paragraph{Proof of lemma \ref{zoz}}
First, we show that for \(l\ge 2\) and \(1\le r\le l\),
\begin{equation}\label{eq:C-recursion}
C_{l,r}=(2l-r-2)C_{l-1,r}+C_{l-1,r-1},
\end{equation}
Indeed, form a weighted partition of \(\{1,\dots,l\}\) by adjoining the new
leaf \(l\) to a partition of \(\{1,\dots,l-1\}\). If \(l\) is a singleton,
one starts from \(r-1\) blocks, giving \(C_{l-1,r-1}\), since \(c_1=1\).
Otherwise \(l\) is inserted into a block \(B\) of size \(m\); using
\(c_{m+1}=(2m-1)c_m\), this multiplies the weight by \(2m-1\). Hence, for
a partition \(\pi'\) with \(r\) blocks,
\[
\sum_{B\in\pi'}(2|B|-1)=2(l-1)-r=2l-r-2, \]
so the non-singleton contribution is \((2l-r-2)C_{l-1,r}\). Summing the two
contributions gives the recursion.

\noindent
Then define
\[
S_l(c):=\sum_{r=1}^l C_{l,r}\,I_{l,r}(c).
\]
%We finally prove that  $S_l(c)=1$ for all $l\ge 1$.
\noindent
Since $C_{1,1}=1$, we obtain $S_1(c)=1$.
\smallskip
\noindent
%\smallskip
%\noindent\emph{Induction step.}
Fix $l\ge 2$.
Using \eqref{eq:C-recursion} and reindexing, we obtain
\begin{equation}\label{eq:S-split}
S_l(c)
=
\sum_{r=1}^{l-1} C_{l-1,r}\Big((2l-r-2)\,I_{l,r}(c)+I_{l,r+1}(c)\Big).
\end{equation}
An integration by parts of $\int_0^\infty
s^{2l-r-2}\exp\!\Bigl(-\frac{s^2}{2}-cs\Bigr)\,ds$ proves the identity
\begin{equation}\label{eq:bracket-id}
(2l-r-2)\,I_{l,r}(c)+I_{l,r+1}(c)=I_{l-1,r}(c),
\qquad 1\le r\le l-1.
\end{equation}

\smallskip
\noindent
Plugging \eqref{eq:bracket-id} into \eqref{eq:S-split} yields
\[
S_l(c)=\sum_{r=1}^{l-1} C_{l-1,r}\,I_{l-1,r}(c)=S_{l-1}(c).
\]
Therefore $S_l(c)=S_1(c)=1$ for all $l\ge 1$.

\qed
\medskip
\color{black}

\section{Regimes away from the critical scale}
\label{sec:off-critical}
%\subsection{The finite-\(N\) cutting process}
%\subsection{Fixed-time marginals and the parameter shift}
%\subsection{The limiting process on marked binary bouquets}
%\subsection{Time reversal and the dual coalescent}
%\subsection{Projection to partitions and the forward equation}
%\subsection{Trivial regimes away from the critical scale}\label{trivial}
%
\subsection{Cutting and monotonicity on the complete graph}

\begin{proposition}[Cutting and effective killing]
\label{prop:cutting-effective-killing}
Fix \(N\) and \(\kappa>0\). Let \(T\) be a
\(\Delta\)-rooted spanning tree on \(K_N\cup\{\Delta\}\) sampled according to
\(\mathbb P^{(N),\kappa}\). Let \(p\in(0,1]\). Independently
for each internal edge of \(T\), retain it with probability \(p\), and cut it
with probability \(z=1-p\); when an edge \((x,p_T(x))\) is cut, replace it by the
cemetery edge \((x,\Delta)\). Let \(\widehat F\) denote the forest produced by the cutting procedure.
Then \(\widehat F\) has the \(\kappa'\)-biased rooted forest law $\mathbb P^{(N),\kappa'}$, where
\[
\kappa'=\frac{\kappa+Nz}{p}.
\]

%In particular, in the continuous-time cutting process where each internal edge
%is cut at rate \(N^{-1/2}\), one has
%\[
%p=p_N(a)=e^{-a/\sqrt N},
%\qquad
%z=z_N(a)=1-e^{-a/\sqrt N},
%\]
%and hence
%\[
%\kappa_N(a)
%=
%\frac{\kappa_N+Nz_N(a)}{p_N(a)}
%=
%(\kappa_N+N)e^{a/\sqrt N}-N.
%\]
\end{proposition}
\begin{proof}

Let \(F\) be a fixed rooted forest on \(K_N\), with rooted components
\[
(T_1,x_1),\dots,(T_r,x_r),
\]
where \(x_i\) is the root of \(T_i\).  
Write
\[
n_i:=|T_i|.
\]
Thus \(F\) has \(N-r\) internal edges.

We compute the probability that the cutting procedure produces this particular
rooted forest \(F\). This probability is obtained by summing over all initial
\(\Delta\)-rooted spanning trees \(T\) which can lead to \(F\).

First, all internal edges of \(F\) must be present in \(T\), and they must
survive the cuts. This gives the factor
\[
p^{N-r}.
\]

It remains to choose the edges of \(T\) which connect the rooted components of
\(F\) to one another and to \(\Delta\). Contract each component \(T_i\) to a
single vertex, still denoted \(T_i\). In the contracted picture we obtain a
directed spanning tree on
\[
\{T_1,\dots,T_r,\Delta\},
\]
oriented towards \(\Delta\).

Now suppose that, in this contracted tree, the outgoing edge of \(T_i\) goes
to \(T_j\), with \(j\ne i\). In the original tree this edge must start from
the root \(x_i\) of the component \(T_i\), but its endpoint can be any vertex
of \(T_j\). Hence there are \(n_j\) original edges realizing the same
contracted edge \(T_i\to T_j\). Each such edge is internal and must be cut, so
it contributes the factor \(z\). Thus a contracted edge
\[
T_i\to T_j
\]
has total weight
\[
z\,n_j.
\]

On the other hand, if the outgoing edge of \(T_i\) goes directly to
\(\Delta\), then in the original tree this is the cemetery edge
\[
(x_i,\Delta),
\]
which has conductance \(\kappa\), and no cutting factor. Thus a contracted
edge
\[
T_i\to\Delta
\]
has weight
\(
\kappa
\).
Therefore
\[
\mathbb P(\widehat F=F)
=
\frac{p^{N-r}}{Z_N(\kappa)}
\sum_{\tau}
\prod_{T_i\to\Delta\in\tau}\kappa
\prod_{T_i\to T_j\in\tau} z\,n_j,
\]
where the sum is over all directed spanning trees \(\tau\) on
\(\{T_1,\dots,T_r,\Delta\}\) oriented towards \(\Delta\), and
\[
Z_N(\kappa)=\kappa(N+\kappa)^{N-1}
\]
is the normalizing constant for the initial \(\kappa\)-biased spanning-tree
law.

We denote the sum over \(\tau\) by \(\Theta_r\). It is the directed
matrix-tree sum for the contracted network with edge weights
\[
T_i\to\Delta:\ \kappa,
\qquad
T_i\to T_j:\ z\,n_j \quad (i\ne j).
\]

By the directed matrix-tree theorem, \(\Theta_r\) is the determinant of the
Dirichlet Laplacian \(M\) on the contracted vertices
\(\{T_1,\dots,T_r\}\). This matrix is
\[
M_{ij}
=
\begin{cases}
\kappa+z(N-n_i), & i=j,\\
-z\,n_j, & i\ne j.
\end{cases}
\]

We now compute this determinant. Let
\[
w:=\kappa+Nz,
\qquad
v:=
\begin{pmatrix}
z n_1\\
\vdots\\
z n_r
\end{pmatrix},
\qquad
\mathbf 1:=
\begin{pmatrix}
1\\
\vdots\\
1
\end{pmatrix}.
\]
Let \(v^\top\) denote the transpose of \(v\). Then
\[
M=wI-\mathbf 1 v^\top .
\]
%Indeed, the \((i,j)\)-entry of \(\mathbf 1v^\top\) is \(zn_j\), so the
%off-diagonal entries of \(wI-\mathbf 1v^\top\) are \(-zn_j\), while the
%diagonal entries are
%\[
%w-zn_i
%=
%\kappa+Nz-zn_i
%=
%\kappa+z(N-n_i).
%\]
%
Using the matrix determinant lemma,
\[
\det(wI-\mathbf 1 v^\top)
=
\det(wI)\left(1-v^\top(wI)^{-1}\mathbf 1\right).
\]
Since
\[
\det(wI)=w^r,
\qquad
(wI)^{-1}=\frac1w I,
\qquad
v^\top\mathbf 1
=
z\sum_{i=1}^r n_i
=
zN,
\]
we get
\[
\det M
=
w^r\left(1-\frac{zN}{w}\right)
=
w^{r-1}(w-zN)
=
\kappa(\kappa+Nz)^{r-1}.
\]
Thus
\[
\Theta_r=\kappa(\kappa+Nz)^{r-1}.
\]

Consequently
\[
\mathbb P(\widehat F=F)
=
%\frac{
%p^{N-r}\kappa(\kappa+Nz)^{r-1}
%}{
%\kappa(N+\kappa)^{N-1}
%}
%=
\frac{
p^{N-r}(\kappa+Nz)^{r-1}
}{
(N+\kappa)^{N-1}
}
%\]
%Set
%\[
%\kappa':=\frac{\kappa+Nz}{p}.
%\]
%Since
%\[
%N+\kappa'=\frac{N+\kappa}{p},
%\]
%we obtain
%\[
%\frac{
%p^{N-r}(\kappa+Nz)^{r-1}
%}{
%(N+\kappa)^{N-1}
%}
=
\frac{\kappa'^{\,r-1}}
     {(N+\kappa')^{N-1}}.
\]
This is exactly the probability of the rooted forest \(F\) under
\(\mathbb P^{(N),\kappa'}\). 

\end{proof}

 For vertices \(x,y\in K_N\), let us write
\(x\sim y\) if they belong to the same component of the forest obtained from
\(\Upsilon\) after deleting the root \(\Delta\), and \(x\not\sim y\)
otherwise. 
\begin{corollary}[Monotonicity on the complete graph]
\label{cor:complete-graph-monotonicity}
Let \(0<\kappa<\kappa'\). There is a coupling of
\(F_\kappa\)
and
\(F_{\kappa'}\) such that
\[
F_{\kappa'}\subseteq F_\kappa
\]
as internal edge sets. Consequently, for any two vertices \(x,y\),
\[
\kappa\longmapsto \PP^{(N),\kappa}(x\sim y)
\;\text{is decreasing,}
\,\text{and}\;
\kappa\longmapsto \PP^{(N),\kappa}(x\not\sim y)
\;\text{is increasing.}
\]
More generally, the induced partition of any fixed marked set is
stochastically increasing in the refinement order as \(\kappa\) increases.
\end{corollary}

\begin{proof}
Choose
$
p=\frac{N+\kappa}{N+\kappa'}$, then 
$\frac{\kappa+Nz}{p}=\kappa'$.
By Proposition~\ref{prop:cutting-effective-killing}, the forest obtained from
a \(\kappa\)-biased forest by independently cutting each internal edge with
probability \(1-p\) has law \(\PP^{(N),\kappa'}\). Since the operation only
deletes internal edges, the stochastic domination follows.
\end{proof}
%\begin{remark}[An elementary check on the complete graph]
%The monotonicity stated above can also be checked
%directly by differentiating $\mathbb P^{(N),\kappa}(1\not\sim 2)$..
%

\begin{theorem}\label{thm:trivial-regimes}
Let \(L\) be fixed.
\begin{itemize}
\item[(a)] If \(\kappa_N=o(\sqrt N)\), then \(\Pi_N\) converges in probability
to the one-block partition of \(L\).
\item[(b)] If \(\kappa_N\gg\sqrt N\), then \(\Pi_N\) converges in probability
to the discrete partition of \(L\). Moreover, letting, for \(x\in L\), \(D_N(x)\) denote the graph distance from \(x\)
to \(\Delta\) in \(\Upsilon\) we have:
\[
\max_{x\in L}\frac{D_N(x)}{\sqrt N}\longrightarrow 0
\]
in probability.
\end{itemize}
\end{theorem}

\begin{proof}[Proof of Theorem~\ref{thm:trivial-regimes}]
First consider two marked vertices. By Theorem~\ref{thm:critical} with
\(l=2\), for fixed \(c>0\),
\[
\lim_{N\to\infty}\mathbb P^{(N),c\sqrt N}(1\not\sim 2)
=I_{2,2}(c)
=c\int_0^\infty e^{-s^2/2-cs}\,ds
\le c\sqrt{\frac\pi2},
\]
and
\[
\lim_{N\to\infty}\mathbb P^{(N),c\sqrt N}(1\sim 2)
=I_{2,1}(c)
=\int_0^\infty s e^{-s^2/2-cs}\,ds
\le \frac1{c^2}.
\]
If \(\kappa_N=o(\sqrt N)\), then for every fixed \(c>0\), eventually
\(\kappa_N\le c\sqrt N\). By corollary~\ref{cor:complete-graph-monotonicity},
\[
\limsup_{N\to\infty}\mathbb P^{(N),\kappa_N}(1\not\sim 2)
\le I_{2,2}(c).
\]
Letting \(c\downarrow0\) gives
\(\mathbb P^{(N),\kappa_N}(1\not\sim 2)\to0\). A union bound over pairs in
\(L\) then gives convergence to the one-block partition.

Similarly, if \(\kappa_N\gg\sqrt N\), then for every fixed \(c>0\), eventually
\(\kappa_N\ge c\sqrt N\). Hence
\[
\limsup_{N\to\infty}\mathbb P^{(N),\kappa_N}(1\sim 2)
\le I_{2,1}(c),
\]
and letting \(c\to\infty\) gives
\(\mathbb P^{(N),\kappa_N}(1\sim 2)\to0\). Another union bound gives convergence to the discrete partition.\\

\medskip
\noindent
It remains to control the distances in the regime \(\kappa_N\gg\sqrt N\).
By the one-point case of Lemma~\ref{lemdet2}, for \(1\le d\le N\),
\[
\mathbb P^{(N),\kappa}(D_N(1)=d)
=
\frac{d+\kappa}{(N+\kappa)^d}
\prod_{i=0}^{d-2}(N-1-i),
\]
with the usual convention that an empty product equals \(1\).
Define
\[
a_m(\kappa):=\prod_{i=0}^{m-1}\frac{N-1-i}{N+\kappa},
\qquad a_0(\kappa)=1.
\]

Then
\[
\PP^{(N),\kappa}(D_N(1)=d)=a_{d-1}(\kappa)-a_d(\kappa).\]
%\qquad
%\PP^{(N),\kappa}(D_N\ge m)=a_{m-1}(\kappa).
After telescoping,
\[
\mathbb P^{(N),\kappa}(D_N(1)\ge m)
=
a_{m-1}(\kappa)=\prod_{i=0}^{m-2}\frac{N-1-i}{N+\kappa},
\qquad 1\le m\le N.
\]
This tail is decreasing in \(\kappa\). Therefore, for fixed \(t>0\) and
\(c>0\), with \(m_N=\lceil t\sqrt N\rceil\),
\[
\limsup_{N\to\infty}
\mathbb P^{(N),\kappa_N}\!\left(D_N(1)\ge t\sqrt N\right)
\le
\lim_{N\to\infty}
\prod_{i=0}^{m_N-2}\frac{N-1-i}{N+c\sqrt N}
=
\exp\!\left(-\frac{t^2}{2}-ct\right).
\]
Letting \(c\to\infty\) gives \(D_N(1)/\sqrt N\to0\) in probability. By symmetry, the same estimate holds for every \(x\in L\). Since \(L\) is
fixed, a union bound gives
\[
\max_{x\in L}\frac{D_N(x)}{\sqrt N}\longrightarrow0
\]
in probability.
% A union
%bound over \(x\in L\) gives
%\(\max_{x\in L}D_N(x)/\sqrt N\to0\) in probability.
\end{proof}

\begin{remark}
The proof of the preceding corollary is special to \(K_N\), because it uses the explicit
cutting identity of Proposition~\ref{prop:cutting-effective-killing}. The
monotonicity statement itself, however, is much more general.

Let \(G=(V,E)\) be a finite network with conductances \(c_e>0\). Let
\(q=(q_x)_{x\in V}\) and \(q'=(q'_x)_{x\in V}\) be two killing functions such
that
\[
q'_x\ge q_x,\qquad x\in V.
\]
Let \(F_q\) and \(F_{q'}\) be the rooted forests obtained by adding a cemetery
vertex \(\Delta\), with cemetery conductances \(q_x\) and \(q'_x\),
respectively. 
The transfer-current theorem in the finite conductance setting
\cite[\S8.3]{stfl} (see also \cite{BurtonPemantle1993} ) gives that the internal edge set is determinantal. Denoting the
 transfer-current
kernel by $K_q$, it is easy to check that  \(Q'\ge Q\), implies that 
%\[
%L+Q'\ge L+Q,
%\]
%and therefore, 
in the positive semidefinite order:
\[
K_{q'}\le K_q .
\]
By stochastic domination for determinantal measures
\cite{LyonsDPM}, the determinantal process with kernel \(K_{q'}\) is
stochastically dominated by the one with kernel \(K_q\). In particular, connection events
\(\{x\sim y\}\) become less likely, while separation events and refinement
events for marked partitions become more likely.

Note however that for a general conductance network, if one starts from the
\(q\)-rooted forest law and then cuts each internal tree edge independently,
the law of the resulting forest is usually not another rooted-forest law with
a modified killing function. 
%To see what replaces it, fix a final rooted
%forest \(F\), with rooted components
%\[
%(T_1,x_1),\dots,(T_r,x_r).
%\]
%After contracting the components, an edge from \(T_i\) to \(\Delta\) has
%weight \(q_{x_i}\), while an edge from \(T_i\) to \(T_j\) carries the total
%weight
%\[
%z\sum_{y\in T_j} c_{x_i y}.
%\]
%Thus the sum over all initial trees leading to \(F\) is a directed
%matrix-tree determinant depending on the full geometry of the components
%\(T_i\). On \(K_N\) with unit conductances and constant killing this
%determinant collapses to
%\[
%\kappa(\kappa+Nz)^{r-1},
%\]
%which depends only on the number of components. This collapse is what makes
%the effective parameter formula
%\[
%\kappa'=\frac{\kappa+Nz}{p}
%\]
%possible. In general, no such reduction to a modified killing function should
%be expected.
\end{remark}
%\begin{remark}[An elementary check on the complete graph]
%In the complete graph case, the monotonicity needed above can also be checked
%directly, although this proof is less conceptual. It is enough to consider two
%marked vertices. Set
%\[
%p_N(\kappa):=\mathbb P^{(N),\kappa}(1\not\sim 2).
%\]
%For \(m=0,\dots,N-2\), let
%\[
%A_m:=\prod_{i=0}^{m-1}(N-2-i),
%\qquad A_0:=1.
%\]
%Summing over the total number \(m\) of unlabelled vertices on the two branches
%and over the \(m+1\) possible splittings, Lemma~\ref{lemdet2} gives
%\[
%p_N(\kappa)
%=
%\sum_{m=0}^{N-2}
%(m+1)\,
%\frac{\kappa(m+2+\kappa)}{(N+\kappa)^{m+2}}
%A_m .
%\]
%Writing \(x=N+\kappa\), and using
%\[
%m+2+\kappa=x-(N-m-2),
%\;
%A_{m+1}=(N-m-2)A_m,
%\]
%the sum telescopes to
%\[
%p_N(\kappa)
%=
%\kappa
%\sum_{m=0}^{N-2}
%\frac{A_m}{(N+\kappa)^{m+1}} .
%\]
%We now differentiate this finite sum. Since \(x=N+\kappa\),
%\[
%p_N'(\kappa)
%=
%\sum_{m=0}^{N-2}
%\frac{A_m(N-\kappa m)}{x^{m+2}}.
%\]
%Using \(\kappa=x-N\), this becomes
%\[
%p_N'(\kappa)
%=
%\sum_{m=0}^{N-2}
%\frac{N(m+1)A_m}{x^{m+2}}
%-
%\sum_{m=1}^{N-2}
%\frac{mA_m}{x^{m+1}}.
%\]
%Reindexing the second sum and using \(A_{m+1}=(N-m-2)A_m\), we obtain
%\[
%p_N'(\kappa)
%=
%\sum_{m=0}^{N-2}
%\frac{(m+1)(m+2)A_m}{x^{m+2}}
%>0.
%\]
%\end{remark}
%

\section{Continuous-time cutting and the induced fragmentation}
\label{sec:cutting}

\paragraph{The cutting process}
In this section, we consider the continuous-time cutting process where each internal edge
is cut at rate \(N^{-1/2}\). Denoting the time parameter by $a$, one has
\[
p=p_N(a)=e^{-a/\sqrt N},
\qquad
z=z_N(a)=1-e^{-a/\sqrt N},
\]
%and hence
%\[
%\kappa_N(a)
%=
%\frac{\kappa_N+Nz_N(a)}{p_N(a)}
%=
%(\kappa_N+N)e^{a/\sqrt N}-N.\]
Applying Proposition~\ref{prop:cutting-effective-killing} with
\(\kappa=\kappa_N\), \(p=p_N(a)\), and \(z=z_N(a)\), we get the effective
parameter
\[
\kappa_N(a)
=
\frac{\kappa_N+Nz_N(a)}{p_N(a)}
=
(\kappa_N+N)e^{a/\sqrt N}-N.
\]
If
$
\frac{\kappa_N}{\sqrt N}\longrightarrow c,
$
then, for every fixed \(a\ge0\),
\(
\frac{\kappa_N(a)}{\sqrt N}\longrightarrow c+a.
\)

%\paragraph{Finite-\(N\) time reversal.}
The time reversal of the finite-\(N\) cutting process is a coalescence process. At a
time where the parameter is \(\kappa\), let \(F\) be a rooted forest with
components \(T_1,\dots,T_r\), rooted respectively at
\(x_1,\dots,x_r\). The reversed chain chooses a component \(T_i\), removes its
cemetery edge \((x_i,\Delta)\), and connects \(x_i\) to a vertex of another
component \(T_j\). More precisely, each possible edge from \(x_i\) to a vertex
of \(T_j\), \(j\ne i\), is added at rate
\(
\frac{1}{\kappa\sqrt N}.
\)
Thus the ordered merger \(T_i\to T_j\) has rate
\(
\frac{|T_j|}{\kappa\sqrt N},
\)
and the unordered pair \(\{T_i,T_j\}\) merges at total rate
\(
\frac{|T_i|+|T_j|}{\kappa\sqrt N}.
\)
This is the finite-\(N\) coalescent dual of the cutting dynamics.

\paragraph{The limiting metric-bouquet fragmentation.}

We now pass to the finite-dimensional marked object. Let \(\mathcal M_L\)
denote the set of metric binary bouquets with leaves \(L\): an element is a
reduced binary bouquet \(q\), together with a positive length \(t_e\) attached
to each reduced edge \(e\in E(q)\). We write
\[
r(q)=\text{number of components of }q,
\quad
\sigma(t):=\sum_{e\in E(q)}t_e .
\]
At parameter \(c>0\), the limiting law
\(\nu_c\) on \(\mathcal M_L\) is given by the critical theorem \ref{thm:trivial-regimes}.
%as follows. If \(q\) has \(r\)
%components, then its density in the edge-length variables is
%\[
%\nu_c(q,dt)
%=
%c^{r-1}(\sigma+c)
%\exp\!\left(-\frac{\sigma^2}{2}-c\sigma\right)
%\prod_{e\in E(q)}dt_e .
%\]

We now describe the limiting cutting operation. Let
\[
X_a=(q,(t_e)_{e\in E(q)}).
\]
Cuts fall on each reduced edge \(e\) with intensity equal to length measure on
that edge. Equivalently, during an infinitesimal time interval \(da\), the
edge \(e\) is hit with probability \(t_e\,da+o(da)\), and conditionally on
being hit the cut position is uniform along the interval of length \(t_e\).

Let us describe the resulting reduced bouquet. If \(e\) is a root stem of one
component, and if the cut is at distance \(u\in(0,t_e)\) from \(\Delta\), then
the part below the cut is erased. Thus the same
component remains, but its root stem has length \(t_e-u\).

Now suppose that \(e\) is not a root stem. Orient \(e\) away from \(\Delta\),
and let \(v\) be its lower endpoint. Let \(e_0\) be the edge ending at \(v\)
from below, and let \(e_1\) be the other edge starting from \(v\), namely the
sibling of \(e\). Since the bouquet is binary, after cutting \(e\), the vertex
\(v\) has only one child. It is therefore the only
vertex which must be suppressed when reducing again. Thus the lower component
is obtained by replacing the two edges \(e_0\) and \(e_1\) by a single edge of
length
\[
t_{e_0}+t_{e_1},
\]
whereas the upper part of \(e\) becomes a new root stem of length
\[
t_e-u.
\]
All other edge lengths are unchanged. We denote the resulting metric bouquet
by
\[
\operatorname{Cut}_{e,u}(q,t).
\]

The generator of the limiting cutting process is therefore
\[
(\mathcal A\Phi)(q,t)
=
\sum_{e\in E(q)}
\int_0^{t_e}
\left[
\Phi\bigl(\operatorname{Cut}_{e,u}(q,t)\bigr)
-
\Phi(q,t)
\right]\,du .
\]

The fixed-time marginals are
\[
X_0\sim\nu_c
\quad\Longrightarrow\quad
X_a\sim\nu_{c+a}.
\]
Indeed, if we start from the
finite-\(N\) model with
\[
\frac{\kappa_N}{\sqrt N}\longrightarrow c
\]
and cut each internal edge at rate \(N^{-1/2}\). At time \(a\), an edge has
survived with probability \(p_N(a)=e^{-a/\sqrt N}\). By
Proposition~\ref{prop:cutting-effective-killing}, the resulting finite
forest has the same law as the biased forest with parameter
\[
\kappa_N(a)
=
\frac{\kappa_N+N(1-p_N(a))}{p_N(a)}
=
(\kappa_N+N)e^{a/\sqrt N}-N.
\]
Hence
\[
\frac{\kappa_N(a)}{\sqrt N}\longrightarrow c+a.
\]
Applying the critical theorem at the new parameter \(c+a\) gives the marginal
law \(\nu_{c+a}\).

This argument also checks the transformation of the length coordinates. In the
finite tree, a cut on a microscopic chain removes the portion below the cut;
if a binary branch point becomes unary, it is suppressed and the two adjacent
remaining chains are concatenated. After division by \(\sqrt N\) and passage to the limit, this is
exactly the operation \(\operatorname{Cut}_{e,u}\) described above.

Equivalently, the same marginal identity may be checked directly at the
limiting level by verifying, for suitable test functions \(\Phi\), that
\[
\frac{d}{dc}\int_{\mathcal M_L}\Phi\,d\nu_c
=
\int_{\mathcal M_L}\mathcal A\Phi\,d\nu_c .
\]
This is the forward equation associated with the generator \(\mathcal A\).
\paragraph{Time reversal on metric bouquets.}
Let \(y=(q,t)\in\mathcal M_L\) be a metric bouquet, and write
\[
\sigma=\sum_{e\in E(q)}t_e .
\]
We denote by \(\mathcal R(q)\) the set of root stems of \(q\), one for each
component of the bouquet. Each component \(B\) of \(q\) is viewed as a metric
tree, endowed with its one-dimensional length measure, denoted by
\(d\ell_B\). Thus integration over \(B\) means integration along its edges.

The time reversal at parameter \(c\) is obtained from the adjoint relation
\[
\nu_c(dx)\,\mathcal A(x,dy)
=
\nu_c(dy)\,\mathcal A_c^*(y,dx)
\]
for off-diagonal transitions. For a test function \(\Phi\), the
reversed generator is the sum
\[
\mathcal A_c^*\Phi
=
\mathcal A_c^{*,0}\Phi+\mathcal A_c^{*,1}\Phi ,
\]
where the two terms are as follows.

The first term keeps the number of components fixed and lengthens one root
stem. If \(s\in\mathcal R(q)\) and \(h>0\), let
\(\operatorname{Ext}_{s,h}(q,t)\) be the metric bouquet obtained by adding a
segment of length \(h\) below the root stem \(s\). Then
\[
(\mathcal A_c^{*,0}\Phi)(q,t)
=
\sum_{s\in\mathcal R(q)}
\int_0^\infty
R_c^{(0)}(\sigma,h)
\Bigl[
\Phi(\operatorname{Ext}_{s,h}(q,t))-\Phi(q,t)
\Bigr]\,dh,
\]
where
\[
R_c^{(0)}(\sigma,h)
=
\frac{\sigma+h+c}{\sigma+c}
\exp\!\left(-\sigma h-\frac{h^2}{2}-ch\right).
\]

The second term merges two components by grafting one onto the other. Let
\(A\) and \(B\) be two distinct components of \(q\). For a point
\(x\in B\), chosen with respect to length measure \(d\ell_B(x)\), and for
\(h>0\), let
\(\operatorname{Graft}_{A\to(B,x),h}(q,t)\) be the bouquet obtained by
removing the cemetery attachment of \(A\), attaching the root of \(A\) to
\(x\) by   the root stem of \(A\) to which is added a segment of length \(h\).
%and then reducing the resulting tree.
%The segment which was the root stem of \(A\) remains attached to the root of
%\(A\); after grafting it is simply regarded as an internal segment of the
%merged component. 
Then
\[
(\mathcal A_c^{*,1}\Phi)(q,t)
=
\sum_{\substack{A,B\in\operatorname{Comp}(q)\\ A\ne B}}
\int_B
\int_0^\infty
R_c^{(1)}(\sigma,h)
\Bigl[
\Phi(\operatorname{Graft}_{A\to(B,x),h}(q,t))-\Phi(q,t)
\Bigr]\,dh\,d\ell_B(x),
\]
where
\[
R_c^{(1)}(\sigma,h)
=
\frac1c\,
\frac{\sigma+h+c}{\sigma+c}
\exp\!\left(-\sigma h-\frac{h^2}{2}-ch\right).
\]
This formula is used for \(c>0\).
The factors above come directly from detailed balance. In both reversed moves,
the new state has total length \(\sigma+h\). The ratio of the length-density
parts of \(\nu_c\) is
\[
\frac{\sigma+h+c}{\sigma+c}
\exp\!\left(-\frac{(\sigma+h)^2-\sigma^2}{2}-ch\right)
=
\frac{\sigma+h+c}{\sigma+c}
\exp\!\left(-\sigma h-\frac{h^2}{2}-ch\right).
\]
For root-stem lengthening, the number of components is unchanged, so there is
no extra factor. For grafting, the reversed move decreases the number of
components by one, and the factor \(c^{r-2}/c^{r-1}=1/c\) appears. The forward
cutting rate along the newly added segment is Lebesgue length measure \(dh\),
which gives the displayed rate densities.

Thus, forward in \(c\), the process cuts metric bouquets; backward in \(c\),
the dual process is obtained by summing over all possible root-stem
lengthenings and all possible ordered graftings of one component onto another.
\section{Relation to previous work}
The closest connections are with some results involving Brownian motion:
As $l=|L|$ increases, our results in the fixed-$\kappa$ regime yield a consistent
family of distributions on $\mathbb R_+$-extensions of binary tree classes with
$l$ leaves. This coincides with the family of ``proper $k$-trees'' introduced by
Aldous in \cite[\S4.3]{A3}. It is shown in \cite{A3} that this family admits a
representation in terms of the Brownian continuum random tree (CRT). Other
constructions of the CRT are given in \cite{A1,A2} using Brownian excursions or
branching processes; see also \cite{NP1,NP2,LG}.

Pitman~\cite{Pitman2003PK} studied partition structures derived from the zero set of Brownian motion
and from stable subordinators, by sampling uniform times and grouping them by excursion intervals.
The limiting EPPF obtained in Section~3
coincides with the excursion partition induced by sampling $l$ uniform times
from a reflected Brownian bridge, conditionally on its local time. It was obtained by Pitman in \cite{Pitman2003PK}, section 8-2. See also \cite{Pitman2006}. This is clearly related to the correspondence between trees and Dyck paths.
The construction time cutting process defined in section 5 is the finite-dimensional marked version of a
fragmentation/coalescence picture naturally related to the
Aldous--Pitman Poisson cutting construction of the Brownian CRT, whose time
reversal gives the standard additive coalescent \cite{AldousPitman1998}, and
to Bertoin's general framework of exchangeable fragmentation processes
\cite{Bertoin2001,Bertoin2006}.\\

On the complete graph, after the parameter change \(\beta=\kappa^{-1}\),
the \(\kappa\)-biased forest measure considered here is the arboreal gas
 (the $q\to0$ random-cluster model); see,
for instance \cite{BCS}, \cite{RayXiao}.\\
%Pitman \cite{Pitman1999} introduced a coalescent construction of uniform random
%forests with a prescribed number of rooted trees, providing a dynamic description
%of forests with fixed component count.
 Luczak and Pittel \cite{LuczakPittel1992}
consider forests chosen uniformly among all forests with a prescribed number of edges, equivalently a prescribed number of components $r$, and derive detailed asymptotics for component sizes, in
particular in the regime $r\asymp \sqrt N$. Since our $\kappa$-biased model
assigns weight $\kappa^{\,r-1}$ to forests with $r$ components, it may be viewed
as a mixture over $r$ of the Luczak--Pittel uniform model. Here we do not
condition on the number of components and instead analyze the unconditioned
model in different scaling regimes of $\kappa$.\\
Martin and Yeo \cite{MartinYeo2018} study critical random forests arising from
Erd\H{o}s--R\'enyi random graphs conditioned to be acyclic, and obtain global
scaling limits for component sizes in the critical window. Pavlov
\cite{Pavlov2000} surveys earlier results on limit distributions for component
sizes, distances and extremal tree quantities in random forests, primarily in
the uniform setting.

%The present paper complements the above literature by focusing on the
%\emph{finite-dimensional local structure} of the $\kappa$-biased forest on $K_N$:
%for a fixed marked set $L$, we obtain in an entirely elementary way explicit asymptotics for the induced
%partition, an explicit limiting density for the rescaled branch lengths, and an
%asymptotic binary-tree description (with independence across blocks in the
%critical regime).


\begin{thebibliography}{15}


\bibitem{A1} D. Aldous. The Continuum Random Tree I. Annals of Probability 19 pp 1-28 (1991).
\bibitem{A2} D. Aldous. The Continuum Random Tree II.  Proceedings of the Durham Symposium on Stochastic Analysis, pp. 23 - 70. (1990).
\bibitem{A3} D. Aldous. The Continuum Random Tree III. Annals of Probability 21, pp. 248-289 (1993.)


\bibitem{AldousPitman1998}
D.~Aldous and J.~Pitman.
The standard additive coalescent.
\emph{Annals of Probability} 26 (1998), 1703--1726.

\bibitem{Bertoin2001}
J.~Bertoin.
Homogeneous fragmentation processes.
\emph{Probability Theory and Related Fields} 121 (2001), 301--318.

\bibitem{Bertoin2006}
J.~Bertoin.
\emph{Random Fragmentation and Coagulation Processes}.
Cambridge Studies in Advanced Mathematics 102.
Cambridge University Press, 2006.


\bibitem {BCS} R. Bauerschmidt, N. Crawford, T. Helmuth, A. Swan.  Random spanning forests and hyperbolic symmetry. Commun. Math. Phys., 381, 1223-1261, (2021)
\bibitem{BurtonPemantle1993}
R.~M. Burton and R.~Pemantle.
Local characteristics, entropy and limit theorems for spanning trees
and domino tilings via transfer-impedances.
\emph{Annals of Probability} 21 (1993), 1329--1371.

\bibitem{dnm} M. D'Achille, N. Enriquez, P. Melotti Massive Spanning Forests on the Complete Graph: Exact Distribution and Local Limit, in Random Structures and Algorithms 68, no. 2, e70048, 2026.


\bibitem{GnedinPitman2006} 
A.~Gnedin and J.~Pitman.
Exchangeable Gibbs partitions and Stirling triangles.
Journal of Mathematical Sciences 138 (2006), 5674--5685.


\bibitem{Grim} G. R. Grimmett. Random labelled trees and their branching networks. Journal of the Australian Mathematical Society, 30 229–237, (1980).
\bibitem{LG} J.F. Le Gall. (1989). Marches aléatoires, mouvement brownien et processus de branchement. Séminaire de probabilités de Strasbourg, Volume 23, Lecture Notes in Mathematics 1372, Springer. pp. 447-464 (1989).
\bibitem{stfl} Y. Le Jan. \newblock{\em Markov paths, loops and fields. }   \'{E}cole d'\'{E}t\'{e} de Probabilit\'{e}s de Saint-Flour XXXVIII - 2008. Lecture Notes in Mathematics 2026. 
Springer-Verlag, Berlin-Heidelberg (2011).
 
 \bibitem{book} Y. Le Jan. Random walks and physical fields. Springer (2024).
 
 \bibitem{LyonsDPM}
R. Lyons. Determinantal probability measures.
Publications Mathématiques de l'IHÉS 98 (2003), 167--212.

\bibitem{LyonsPeres}
R.~Lyons and Y.~Peres.
\emph{Probability on Trees and Networks}.
Cambridge Series in Statistical and Probabilistic Mathematics, vol.~42.
Cambridge University Press, 2017.

 \bibitem{LuczakPittel1992}
T. Luczak and B. Pittel.
Components of random forests.
\emph{Combinatorics, Probability and Computing} 1 (1992), 35--52.

\bibitem{MartinYeo2018}
J.B.~Martin and A.~Yeo.
Critical random forests.
\emph{ALEA, Latin American Journal of Probability and Mathematical Statistics} 15 (2018), 913-960.


 
\bibitem{NP1} J. Neveu, J. W. Pitman. (1989). Renewal property of the extrema and tree property of the excursion of a one dimensional Brownian motion. Séminaire de probabilités de Strasbourg, Volume 23. Lecture Notes in Mathematics 1036, Springer, pp. 239-247 (1989).
  
\bibitem{NP2} J. Neveu, J. W. Pitman. The branching process in a brownian excursion
Séminaire de probabilités de Strasbourg, Volume 23.  Lecture Notes in Mathematics 1372, Springer(1989), pp. 248-257  (1989).

\bibitem{Pavlov2000}
Yu.~L. Pavlov..
\emph{Random Forests},
VSP International Science Publishers (2000).



\bibitem{Pitman2006}
J.W.~Pitman.
Combinatorial Stochastic Processes.
Lecture Notes in Mathematics 1875, Springer (2006).


\bibitem{Pitman2003PK}
J.W. Pitman.
\newblock Poisson--Kingman partitions.
\newblock In \emph{Statistics and Science: a Festschrift for Terry Speed},
  IMS Lecture Notes--Monograph Series, \textbf{40} (2003), 1--34.
\newblock (Also available as UC Berkeley Statistics Tech. Report 625, 2002.)
\bibitem{RayXiao}
G.Ray, B. Xiao 
Forests on wired regular trees.
\emph{ALEA, Latin American Journal of Probability and Mathematical Statistics} 19 1035–1043 (2022)


%\bibitem{PitmanYor}
%Pitman, J. and Yor, M.,
%The two-parameter Poisson--Dirichlet distribution derived from a stable subordinator,
%\emph{Ann.\ Probab.} \textbf{25} (1997), 855--900.
%
\bibitem {Szkr} A. Rényi, G. Szekeres.  
On the height of trees.
\emph{Journal of the Australian Mathematical Society} 7(4) (1967), 497--507.

\bibitem{wil} D. Wilson. Generating random spanning trees more quickly than the cover time.
Proceedings of the 28th annual ACM symposium on Theory of computing, 296-303. ACM, (1996).


\end{thebibliography}
\end{document}